\documentclass{article}

\oddsidemargin  0.0in
\evensidemargin 0.0in
\textwidth      6.5in
\textheight     8.5in
\headheight     0.0in
\topmargin      0.0in

\usepackage{float}
\floatstyle{ruled}
\newfloat{model}{thp}{lop}
\floatname{model}{Model}

\usepackage{multicol}

\usepackage{xcolor}

\newcommand{\refacepf}{(\hyperref[model:ac_pf_e]{AC-E-PF})}
\newcommand{\refsocepf}{(\hyperref[model:soc_pf_e]{SOC-E-PF})}

\newcommand{\refnfepf}{(\hyperref[model:nf_pf_e]{NF-E-PF})}
\newcommand{\refcpepf}{(\hyperref[model:cp_pf_e]{CP-E-PF})}

\usepackage{xspace}

\usepackage{url}
\usepackage{amssymb,amsmath,amsthm,bm,mathtools,mathabx}
\usepackage{graphicx}
\usepackage{authblk}

\usepackage{hyperref} 
\hypersetup{
colorlinks=true, 
breaklinks=true,
urlcolor= blue, 
linkcolor= blue,
citecolor=red, 
}

\newtheorem{theorem}{Theorem}[section]
\newtheorem{lemma}[theorem]{Lemma}

\newtheorem{corollary}[theorem]{Corollary}

\begin{document}

\title{Network Flow and Copper Plate Relaxations \\ for AC Transmission Systems} 
\author[1,2]{Carleton Coffrin} 
\author[1,2]{Hassan L. Hijazi}
\author[1,2]{Pascal Van Hentenryck}

\affil[1]{Optimisation Research Group, NICTA}
\affil[2]{College of Engineering and Computer Science, Australian National University}
\maketitle

\abstract{
Nonlinear convex relaxations of the power flow equations and, in
particular, the Semi-Definite Programming (SDP), Convex Quadratic (QC), and Second-Order
Cone (SOC) relaxations, have attracted significant interest in recent years. 
Thus far, little attention has been given to simpler linear relaxations of the power flow equations, which may bring significant performance gains at the cost of model accuracy.
To fill the gap, this paper develops two intuitive linear relaxations of the power flow equations, one based on classic network flow models (NF) and another inspired by copper plate approximations (CP).
Theoretical results show that the proposed NF model is a relaxation of the established nonlinear SOC model and the CP model is a relaxation of the NF model.
Consequently, considering the linear NF and CP relaxations alongside the established nonlinear relaxations (SDP, QC, SOC) provides a rich variety of tradeoffs between the relaxation accuracy and performance.
}

\section*{Nomenclature}
\begin{multicols}{2} 

\begin{description}
  \item [{$N$}]  - The set of nodes in the network 
  \item [{$E$}]  - The set of {\em from} edges in the network 
  \item [{$E^R$}]  - The set of {\em to} edges in the network 
  %
  \item [{$\bm i$}] - imaginary number constant
  \item [{$I$}] - AC current
  \item [{$S = p+ \bm iq$}] - AC power
  \item [{$V = v \angle \theta$}]  - AC voltage
  \item [{$Z = r+ \bm ix$}] - Line impedance
  \item [{$Y = g + \bm ib$}]  - Line admittance
  \item [{$T = t \angle \theta^t$}]  - Transformer properties
  \item [{$Y^s = g^s + \bm ib^s$}]  - Bus shunt admittance
  \item [{$W $}]  - Product of two AC voltages
  \item [{$L$}]  - Current magnitude squared, $|I|^2$
  %
  \item [{$b^c$}] - Line charging
  \item [{$s^u$}] - Line apparent power thermal limit
  \item [{$\theta^\Delta$}] - Phase angle difference limit
  \item [{$S^d = p^d+ \bm iq^d$}] - AC power demand
  \item [{$S^g = p^g+ \bm iq^g$}] - AC power generation
  \item [{$c_0,c_1,c_2$}] - Generation cost coefficients 
 %
   \item [{$\Re(\cdot)$}] - Real component of a complex number
   \item [{$\Im(\cdot)$}] - Imaginary component of a complex number
   \item [{$(\cdot)^*$}] - Conjugate of a complex number
   \item [{$|\cdot|$}] - Magnitude of a complex number, $l^2$-norm
   \item [{$\angle$}] - Angle of a complex number
  %
  %
  \item [{$x^u$}] - Upper bound of $x$
  \item [{$x^l$}] - Lower bound of $x$
  \item [{$\widecheck{x}$}] - Convex envelope of $x$
  \item [{$\bm x$}] - A constant value
\end{description}

\end{multicols}

\clearpage
\section{Introduction}
\label{sec:intro}

Nonlinear convex relaxations of the power flow equations have attracted
significant interest in recent years. They include the Semi-Definite
Programming (SDP) \cite{Bai2008383}, Second-Order Cone (SOC)
\cite{Jabr06}, Convex-DistFlow (CDF) \cite{6102366}, and the recent
Quadratic Convex (QC) \cite{QCarchive} and Moment-Based \cite{7038397,6980142} relaxations.
Much of the excitement underlying this line of research comes from the fact that
the SDP relaxation has shown to be tight on a variety of case studies
\cite{5971792}, opening a new avenue for accurate, reliable, and
efficient solutions to a variety of power system applications. Indeed,
industrial-strength optimization tools (e.g., Gurobi \cite{gurobi},
cplex \cite{cplex}, Mosek \cite{mosek}) are now available to solve
various classes of convex optimization problems. 

Thus far, study of power flow relaxations has focused primarily on these nonlinear methods in the interest of the highest possible model accuracy at any cost.
Subsequently, little attention has been given to simpler linear relaxations \cite{5772044,linear_opf}, which are less accurate but have significant performance and scalability benefits.
To fill that gap, this paper develops two intuitive linear relaxations of the power flow equations, one based on classic network flow models \cite{ahuja1993network} and another inspired by copper plate approximations.
The main contributions of this work can be summarized as:
\begin{enumerate}
\item Developing two intuitive linear relaxations for AC transmission system optimization, the network flow (NF) and copper plate (CP) relaxations.
\item Proving that NF is a linear relaxation of the established nonlinear SOC relaxation and that CP is a relaxation of NF.
%
%
\end{enumerate}
It is important to emphasize that, contrary to the models
  presented herein, the established DC power flow model \cite{Stott_2009bb} is a linear {\em approximation} of
  the AC power flow, not a relaxation. As a consequence, it cannot be
  used for providing quality guarantees (i.e. lower bounds) on the underlying AC power flow model.  
  
The rest of
the paper is organized as follows.  Section \ref{sec:ac_ex} reviews
the extended formulation of non-convex AC power flow feasibility
problem for transmission systems (AC-E-PF).  Section \ref{sec:relax_e}
develops the relaxations of (AC-E-PF), it begins with a review of the
SOC relaxation and then proposes the NF and CP relaxations.  Section
\ref{sec:lin_relax} reviews the related works on linear relaxations
and illustrates how all of the relaxations considered perform on a
well known 3-bus example.  Section \ref{sec:conc} concludes the paper.

\section{Extended AC Power Flow for Transmission Systems}
\label{sec:ac_ex}

In the interest of clarity, AC Power Flows, and their relaxations, are most often presented on the simplest version of the AC power flow equations.  However, transmission system test cases include additional parameters such as bus shunts ($\bm {Y^s}$), line charging ($\bm {b^c}$), and transformers ($\bm T$), which complicate the AC power flow equations significantly.
This section presents an extended AC power flow problem including all of these additional parameters.

A power network is composed of a variety of components such as buses, lines, generators, and loads.  
The network can be interpreted as a graph $(N,E)$ where the set of buses $N$ represent the nodes and the set of lines $E$ represent the edges. 
Note that $E$ is an undirected set of edges, however each edge $(i,j) \in E$ is assigned a {\em from} side $(i,j)$ and a {\em to} side $(j,i)$, arbitrarily.
These two sides are critically important as power is lost as it flows from one side to another. 
Lastly, to break numerical symmetries in the model and to allow easy comparison of solutions, a reference node $r \in N$ is also specified.

The AC power flow equations are based on complex quantities for 
current $I$, voltage $V$, admittance $Y$, transformers $T$, and power $S$, 
which are linked by the physical properties of
Kirchhoff's Current Law (KCL),
\begin{align}
& I^g_i - {\bm I^d_i} - {\bm Y^s_i} V_i = \sum_{(i,j)\in E} I_{ij} + \sum_{(j,i)\in E} I_{ji} \;\; \forall i \in N \label{eq:kcl_e}
\end{align}
Ohm's Law, i.e.,
\begin{align}
& I_{ij} = \left( \bm Y_{ij} + \bm i \frac{\bm {b^c}_{ij}}{2} \right) \frac{V_i}{\bm T_{ij}^*} - \bm Y_{ij} V_j  \;\; \forall (i,j),(j,i) \in E \label{eq:ohm_e}
\end{align}
and the definition of AC power, i.e.,
\begin{align}
& S_{ij} = \frac{V_i}{\bm T_{ij}^*} I_{ij}^*  \;\; \forall (i,j)\in E \label{eq:power_e}
\end{align}
Combining these three properties yields the extended AC Power Flow equations,
\begin{subequations}
\begin{align}
& S^g_i - {\bm S^d_i} - (\bm Y^s_i)^* |V_i|^2 = \sum_{(i,j)\in E} S_{ij} + \sum_{(j,i)\in E} S_{ji} \;\; \forall i\in N \label{eq:bus_s_e} \\ 
& S_{ij} = \left( \bm Y^*_{ij} - \bm i\frac{\bm {b^c}_{ij}}{2} \right) \frac{|V_i|^2}{|\bm{T}_{ij}|^2} - \bm Y^*_{ij} \frac{V_iV^*_j}{\bm{T}^*_{ij}} \;\; (i,j)\in E \label{eq:line_s_e_from}\\
& S_{ji} = \left( \bm Y^*_{ij} - \bm i\frac{\bm {b^c}_{ij}}{2} \right) |V_j|^2 - \bm Y^*_{ij} \frac{V^*_iV_j}{\bm{T}_{ij}} \;\; (i,j)\in E \label{eq:line_s_e_to}
\end{align}
\end{subequations}
Observe that $\sum$ over $(i,j)\in E$ collects the edges oriented in the 
{\em from} direction and $\sum$ over $(j,i)\in E$ collects the edges oriented in the {\em to} direction around bus $i\in N$.
These non-convex nonlinear equations define how power flows in the
network and are a core building block in many power system
applications. Additionally, many applications require various
operational side constraints on the flow of power. We now review some 
of the most significant ones.

\paragraph{Generator Capabilities}

AC generators have limitations on the amount of active and reactive
power they can produce $S^g$, which is characterized by a generation
capability curve \cite{9780070359581}.  Such curves typically define
nonlinear convex regions which are often approximated by boxes in
AC transmission system test cases, i.e.,
\begin{align}
& \bm {S^{gl}}_i \leq S^g_i \leq \bm {S^{gu}}_i \;\; \forall i \in N 
\end{align}

\paragraph{Line Thermal Limit}

AC power lines have thermal limits \cite{9780070359581} to prevent
lines from sagging and automatic protection devices from activating.
These limits are typically given in Volt Amp units and constrain 
the apparent power flows on the lines, i.e.,
\begin{align}
& |S_{ij}| \leq \bm {s^u}_{ij} \;\; \forall (i,j),(j,i) \in E
\end{align}

\paragraph{Bus Voltage Limits}

Voltages in AC power systems should not vary too far (typically $\pm
10\%$) from some nominal base value \cite{9780070359581}.  This is
accomplished by putting bounds on the voltage magnitudes, i.e.,
\begin{align}
& \bm {v^l}_i \leq |V_i| \leq \bm {v^u}_i \;\; \forall i \in N
\end{align}
%

\paragraph{Phase Angle Differences}

Small phase angle differences are also a design imperative in AC power
systems \cite{9780070359581} and it has been suggested that phase
angle differences are typically less than $10$ degrees in practice
\cite{Purchala:2005gt}. These constraints have not typically been
incorporated in AC transmission test cases \cite{matpower}. However,
recent work \cite{LPAC_ijoc,QCarchive} have observed that
incorporating Phase Angle Difference (PAD) constraints, i.e.,
\begin{align}
&  -\bm {\theta^\Delta}_{ij} \leq \angle \! \left( V_i V^*_j \right) \leq \bm {\theta^\Delta}_{ij} \;\; \forall (i,j) \in E \label{eq:pad_1}
\end{align}
is useful in the convexification of the AC power flow equations. For
simplicity, this paper assumes that the phase angle difference bounds
are symmetrical and within the range $(- \bm \pi/2, \bm \pi/2 )$, i.e.,
\begin{align}
& 0 \leq \bm {\theta^{\Delta}}_{ij} \leq \frac{\bm \pi}{2} \;\; (i,j) \in E \nonumber
\end{align}
but the results presented here can be extended to more general
cases. Observe also that the PAD constraints \eqref{eq:pad_1} can be
implemented as a linear relation of the real and imaginary components
of $V_iV^*_j$ \cite{6822653}, i.e.,
\begin{align}
& \tan(-\bm {\theta^\Delta}_{ij}) \Re\left(V_iV^*_j\right) \leq \Im\left(V_iV^*_j\right) \leq \tan(\bm {\theta^\Delta}_{ij}) \Re\left(V_iV^*_j\right) \;\; \forall (i,j) \in E \label{eq:w_pad}
\end{align}
and that equation \eqref{eq:line_s_e_from} can be used to express these in terms of the $S$ variables as follows,
\begin{align}
& V_iV^*_j = \bm Z^*_{ij}  \bm{T}^*_{ij} \left( \left( \bm Y^*_{ij} - \bm i\frac{\bm {b^c}_{ij}}{2} \right) \frac{|V_i|^2}{|\bm T_{ij}|^2} - S_{ij} \right) \;\; (i,j) \in E \label{eq:vv_factor_e}
\end{align}
These equations combined with \eqref{eq:w_pad} implement the PAD constraints in terms of the $V$ and $S$ variables as follows,  
\begin{align}
 \tan(-\bm {\theta^\Delta}_{ij}) \Re\left( \bm Z^*_{ij}  \bm{T}^*_{ij} \left( \left( \bm Y^*_{ij} - \bm i\frac{\bm {b^c}_{ij}}{2} \right) \frac{|V_i|^2}{|\bm T_{ij}|^2} - S_{ij} \right) \right) &\leq \Im\left( \bm Z^*_{ij}  \bm{T}^*_{ij} \left( \left( \bm Y^*_{ij} - \bm i\frac{\bm {b^c}_{ij}}{2} \right) \frac{|V_i|^2}{|\bm T_{ij}|^2} - S_{ij} \right) \right) \;\; \forall (i,j) \in E \label{eq:s_pad_1} \\
 \tan(\bm {\theta^\Delta}_{ij}) \Re\left( \bm Z^*_{ij}  \bm{T}^*_{ij} \left( \left( \bm Y^*_{ij} - \bm i\frac{\bm {b^c}_{ij}}{2} \right) \frac{|V_i|^2}{|\bm T_{ij}|^2} - S_{ij} \right) \right) &\geq \Im\left( \bm Z^*_{ij}  \bm{T}^*_{ij} \left( \left( \bm Y^*_{ij} - \bm i\frac{\bm {b^c}_{ij}}{2} \right) \frac{|V_i|^2}{|\bm T_{ij}|^2} - S_{ij} \right) \right)  \;\; \forall (i,j) \in E \label{eq:s_pad_2} 
\end{align}
The usefulness of these alternate formulations will be apparent later in the paper.

\paragraph{Other Constraints}
Other line flow constraints have been proposed, such as, active power
limits and voltage difference limits \cite{5971792,6822653}.  However,
we do not consider them here since, to the best of our knowledge, test
cases incorporating these constraints are not readily available.

\begin{model}[t]
\caption{The Extended AC Power Flow Feasibility Problem (AC-E-PF)}
\label{model:ac_pf_e}
\begin{subequations}
\begin{align}
\mbox{\bf variables:} \nonumber \\
& S^g_i \in ( \bm {S^{gl}}_i, \bm {S^{gu}}_i) \;\; \forall i\in N \nonumber \\
& V_i \in ( \bm {V^l}_i, \bm {V^u}_i ) \;\; \forall i\in N \nonumber \\
& S_{ij} \in (\bm {S^{l}}_{ij},\bm {S^{u}}_{ij})\;\; \forall (i,j),(j,i) \in E \nonumber \\
%
\mbox{\bf subject to:} \nonumber \\
& \angle V_{\bm r} = 0 \label{eq:ac_e_0} \\
& \bm {v^l}_i \leq |V_i| \leq \bm {v^u}_i \;\; \forall i \in N \label{eq:ac_e_1} \\
& S^g_i - {\bm S^d_i} - (\bm Y^s_i)^* |V_i|^2 = \sum_{\substack{(i,j)\in E}} S_{ij} + \sum_{\substack{(j,i)\in E}} S_{ji} \;\; \forall i\in N \label{eq:ac_e_3} \\ 
& S_{ij} = \left( \bm Y^*_{ij} - \bm i\frac{\bm {b^c}_{ij}}{2} \right) \frac{|V_i|^2}{|\bm{T}_{ij}|^2} - \bm Y^*_{ij} \frac{V_iV^*_j}{\bm{T}^*_{ij}} \;\; (i,j)\in E \label{eq:ac_e_4} \\
& S_{ji} = \left( \bm Y^*_{ij} - \bm i\frac{\bm {b^c}_{ij}}{2} \right) |V_j|^2 - \bm Y^*_{ij} \frac{V^*_iV_j}{\bm{T}_{ij}} \;\; (i,j)\in E   \label{eq:ac_e_5} \\
& |S_{ij}| \leq \bm {s^u}_{ij} \;\; \forall (i,j),(j,i) \in E \label{eq:ac_e_6}  \\
& -\bm {\theta^\Delta}_{ij} \leq \angle (V_i V^*_j) \leq \bm {\theta^\Delta}_{ij} \;\; \forall (i,j) \in E  \label{eq:ac_e_7} 
\end{align}
\end{subequations}
\end{model}

\paragraph{The Extended AC Power Flow Feasibility Problem}

Combining all of these constraints yields the Extended AC Power Flow Feasibility presented in Model \ref{model:ac_pf_e} \refacepf. 
The operational constraints bus voltage and generator output are captured by the variable bounds.
Constraint \eqref{eq:ac_e_0} sets the reference angle, to eliminate numerical symmetries.  
Constraint \eqref{eq:ac_e_1} capture the bus voltage limits.  
Constraints \eqref{eq:ac_e_3} capture KCL and constraints \eqref{eq:ac_e_4}--\eqref{eq:ac_e_5} capture Ohm's Law.  
Finally constraints \eqref{eq:ac_e_6} and \eqref{eq:ac_e_7} enforce the line flow and phase angle difference limits respectively. 
Notice that this is a non-convex nonlinear satisfaction problem due to the product of voltage variables, $V_i V^*_j$ and is NP-Hard \cite{verma2009power,ACSTAR2015}.

Noting that bounds on the voltage variables and edge power flow variables (i.e. $V_i \in ( \bm {V^l}_i, \bm {V^u}_i ), S_{ij} \in (\bm {S^{l}}_{ij},\bm {S^{u}}_{ij})$) are not always specified,
we observe that constraints \eqref{eq:ac_e_1}, \eqref{eq:ac_e_6} respectively imply reasonable bounds on each of these variables.
\begin{lemma}
\label{lemma:flow_lim_const_equiv}
$\bm {S^{u}}_{ij} = \bm {s^u}_{ij} + \bm i \bm {s^u}_{ij}, \bm {S^{l}}_{ij} = -(\bm {s^u}_{ij} + \bm i \bm {s^u}_{ij})$ are valid bounds in \refacepf.
\end{lemma}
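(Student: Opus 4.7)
The plan is to show that the proposed complex bounds are implied by the thermal limit constraint \eqref{eq:ac_e_6}, so that adding them as variable bounds does not remove any feasible solution from \refacepf.

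First, I would decompose $S_{ij}$ into its real and imaginary parts, writing $S_{ij} = p_{ij} + \bm i q_{ij}$ with $p_{ij} = \Re(S_{ij})$ and $q_{ij} = \Im(S_{ij})$. The proposed bounds $\bm{S^{u}}_{ij} = \bm{s^u}_{ij} + \bm i \bm{s^u}_{ij}$ and $\bm{S^{l}}_{ij} = -(\bm{s^u}_{ij} + \bm i \bm{s^u}_{ij})$, when interpreted componentwise on the real and imaginary parts, amount to the pair of inequalities $-\bm{s^u}_{ij} \leq p_{ij} \leq \bm{s^u}_{ij}$ and $-\bm{s^u}_{ij} \leq q_{ij} \leq \bm{s^u}_{ij}$.

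Next, I would observe that constraint \eqref{eq:ac_e_6} enforces $|S_{ij}| = \sqrt{p_{ij}^2 + q_{ij}^2} \leq \bm{s^u}_{ij}$. Since $|p_{ij}| \leq \sqrt{p_{ij}^2 + q_{ij}^2}$ and $|q_{ij}| \leq \sqrt{p_{ij}^2 + q_{ij}^2}$ for any real numbers $p_{ij}, q_{ij}$, the thermal limit immediately gives $|p_{ij}| \leq \bm{s^u}_{ij}$ and $|q_{ij}| \leq \bm{s^u}_{ij}$, which is exactly the componentwise bound stated above. Applying the same argument to the reverse orientation $(j,i) \in E$ yields the claim for $S_{ji}$ as well.

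There is no real obstacle here: the only subtlety is interpreting inequalities between complex numbers as componentwise inequalities on real and imaginary parts, which is the convention implicit in writing $S^g_i \in (\bm{S^{gl}}_i, \bm{S^{gu}}_i)$ and $V_i \in (\bm{V^l}_i, \bm{V^u}_i)$ earlier in the model. Once this convention is made explicit, the lemma reduces to the elementary inequality $\max(|\Re(z)|, |\Im(z)|) \leq |z|$, and the proof is essentially a one-line consequence of \eqref{eq:ac_e_6}.
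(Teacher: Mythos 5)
Your proposal is correct and follows essentially the same route as the paper: both derive the componentwise bounds $|\Re(S_{ij})| \leq \bm{s^u}_{ij}$ and $|\Im(S_{ij})| \leq \bm{s^u}_{ij}$ directly from the thermal limit \eqref{eq:ac_e_6} by observing that each squared component is dominated by the sum of squares. The paper merely writes this out more laboriously by solving for $\Re(S_{ij})$ and bounding the resulting square root, where you invoke $\max(|\Re(z)|,|\Im(z)|)\leq|z|$ in one step.
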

\begin{proof}
First, observe that \eqref{eq:ac_e_6} is equivalent to,
\begin{subequations}
\begin{align}
& \Re \left( S_{ij} \right)^2 + \Im \left( S_{ij} \right)^2 \leq (\bm {s^u}_{ij})^2
\end{align}
\end{subequations}
solving for $\Re \left( S_{ij} \right)$ results in,
\begin{subequations}
\begin{align}
\Re \left( S_{ij} \right)^2  &\leq (\bm {s^u}_{ij})^2 - \Im \left( S_{ij} \right)^2 \\
-\sqrt{(\bm {s^u}_{ij})^2 - \Im \left( S_{ij} \right)^2} \leq \Re \left( S_{ij} \right) &\leq \sqrt{(\bm {s^u}_{ij})^2 - \Im \left( S_{ij} \right)^2} 
\end{align}
\end{subequations}
Noticing that $(\bm {s^u}_{ij})^2, \Im(\cdot)^2$ are both positive, we can deduce the following bounds,
\begin{subequations}
\begin{align}
& -\bm {s^u}_{ij} \leq \Re \left( S_{ij} \right) \leq \bm {s^u}_{ij}
\end{align}
\end{subequations}
A similar argument holds for the bounds of $\Im(S_{ij})$, demonstrating the result.
\end{proof}

\begin{lemma}
\label{lemma:flow_lim_const_equiv}
$\bm {V^{u}}_i = \bm {v^u}_{i} + \bm i \bm {v^u}_{i}, \bm {V^{l}}_{ij} = -(\bm {v^u}_{i} + \bm i \bm {v^u}_{i})$ are valid bounds in \refacepf.
\end{lemma}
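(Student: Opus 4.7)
The plan is to mirror the argument used for Lemma \ref{lemma:flow_lim_const_equiv} on $S_{ij}$, since the upper bound constraint \eqref{eq:ac_e_1} has exactly the same form as the thermal limit constraint \eqref{eq:ac_e_6}: both bound the magnitude of a complex variable by a nonnegative real constant. The only extra nuance here is that \eqref{eq:ac_e_1} also has a lower bound $\bm{v^l}_i \leq |V_i|$, but this is a bound from below on a magnitude, so it does not interfere with the box bounds on the real and imaginary parts that we are trying to derive; we can simply ignore it (relax it) for the purpose of showing that the proposed box contains every feasible $V_i$.

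First, I would rewrite $|V_i| \leq \bm{v^u}_i$ in the equivalent quadratic form
\begin{equation*}
\Re(V_i)^2 + \Im(V_i)^2 \leq (\bm{v^u}_i)^2.
\end{equation*}
Then, solving for $\Re(V_i)$ and using the fact that $\Im(V_i)^2 \geq 0$, one obtains
\begin{equation*}
-\bm{v^u}_i \leq -\sqrt{(\bm{v^u}_i)^2 - \Im(V_i)^2} \leq \Re(V_i) \leq \sqrt{(\bm{v^u}_i)^2 - \Im(V_i)^2} \leq \bm{v^u}_i,
\end{equation*}
and an identical argument with the roles of real and imaginary parts swapped yields $-\bm{v^u}_i \leq \Im(V_i) \leq \bm{v^u}_i$. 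These two componentwise inequalities together say exactly that $V_i$ lies in the rectangle with corners $\pm(\bm{v^u}_i + \bm i \bm{v^u}_i)$, which is the claim.

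I do not anticipate any real obstacle; the proof is essentially a one-line calculation that the disk of radius $\bm{v^u}_i$ in the complex plane is contained in its circumscribing axis-aligned square. The only thing worth flagging explicitly in the write-up is that the lower-magnitude constraint $\bm{v^l}_i \leq |V_i|$ plays no role: a box bound is a valid enclosure whenever every feasible point lies inside it, and discarding a constraint only enlarges the feasible set, so the enclosure derived from $|V_i| \leq \bm{v^u}_i$ alone is still valid for \refacepf.
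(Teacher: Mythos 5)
Your proposal is correct and follows exactly the paper's route: the paper likewise rewrites the upper-bound part of \eqref{eq:ac_e_1} as $\Re(V_i)^2 + \Im(V_i)^2 \leq (\bm{v^u}_i)^2$ and then invokes the same argument used for the $S_{ij}$ bounds. Your explicit remark that the lower-magnitude constraint can be discarded is a nice touch consistent with the paper's own observation that this constraint is non-convex and cannot be folded into the variable bounds.
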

\begin{proof}
\begin{subequations}
First, observe that the upper bound constraint in \eqref{eq:ac_e_1} is equivalent to,
\begin{align}
& \Re \left( V_{i} \right)^2 + \Im \left( V_{i} \right)^2 \leq (\bm {v^u}_{i})^2
\end{align}
\end{subequations}
The result follows similarly to the previous one.
\end{proof}
\noindent
Also observe that the lower bound constraint in \eqref{eq:ac_e_1} is non-convex and cannot, in general, be incorporated into the bounds on $V$.

\section{Three Relaxations of the Extended AC Power Flow}
\label{sec:relax_e}

This section develops three successive relaxations of \refacepf.  
It begins with the established Second-Order Cone (SOC) relaxation \cite{Jabr06}. 
The SOC relaxation is then further relaxed in to a {\em Network Flow} model (NF), similar to those traditionally studied in operations research.
Lastly, the NF model is then further relaxed into a {\em Copper Plate} model (CP), which ignores all of the network aspects and simply states that the supply must be at least as large as the demand. 

\subsection{The Second-Order Cone Relaxation (SOC)}

The SOC relaxation was first proposed in \cite{Jabr06} and utilizes two key insights.  First, by lifting the product of voltage variables $V_i V_j^*$ into a higher dimensional space (i.e. the $W$-space),
\begin{subequations}
\begin{align}
& W_{i} = |V_{i}|^2 \;\; i \in N \label{eq:w_link_1} \\
& W_{ij} = V_i V_j^* \;\; \forall(i,j) \in E \label{eq:w_link_2} 
\end{align}
\end{subequations}
a linear relaxation of \refacepf~is obtained.  Second, a relaxation of the absolute square of the voltage products is developed to strengthen this $W$-space relaxation, as follows,
\begin{subequations}
\begin{align}
\left( \frac{V_i}{\bm T_{ij}^*} V_j^* \right) \left( \frac{V_i}{\bm T_{ij}^*} V_j^* \right)^* &= \left( \frac{V_i}{\bm T_{ij}^*} V_j^* \right) \left( \frac{V_i^*}{\bm T_{ij}} V_j \right) \\ 
\frac{|V_i  V_j^*|^2}{|\bm T_{ij}^*|^2} &= \frac{|V_i|^2}{|\bm T_{ij}^*|^2} |V_j|^2 \\
|V_iV_j^*|^2 &= |V_i|^2 |V_j|^2 \\
|W_{ij}|^2 &= W_i W_j \;\; \forall(i,j) \in E \\
|W_{ij}|^2 &\leq W_i W_j \;\; \forall(i,j) \in E \label{eq:asvp_w}
\end{align}
\end{subequations}
Notice that constraint \eqref{eq:asvp_w} is a convex second-order cone constraint, which is widely supported by industrial strength convex optimization tools (e.g., Gurobi \cite{gurobi}, CPlex \cite{cplex}, Mosek \cite{mosek}).

The complete SOC relaxation of \refacepf~is presented in Model \ref{model:soc_pf_e} \refsocepf.
Constraints \eqref{eq:soc_e_3} capture KCL and constraints \eqref{eq:soc_e_4}--\eqref{eq:soc_e_5} capture line power flow in the $W$-space.  
Constraints \eqref{eq:soc_e_6} strengthen the relaxation with voltage product second-order cone constraint, and 
constraints \eqref{eq:soc_e_8}--\eqref{eq:soc_e_2} capture the line power flow and phase angle difference operational constraints.

\begin{model}[t]
\caption{The SOC Relaxation of Extended AC Power Flow (SOC-E-PF)}
\label{model:soc_pf_e}
\begin{subequations}
\begin{align}
\mbox{\bf variables:} \nonumber \\
& S^g_i \in ( \bm {S^{gl}}_i, \bm {S^{gu}}_i) \;\; \forall i\in N \nonumber \\
& W_i \in ( (\bm {v^l}_i)^2, (\bm {v^u}_i)^2 ) \;\; \forall i\in N \nonumber \\
& W_{ij} \;\; \forall (i,j)\in E \nonumber \\
& S_{ij} \in (\bm {S^{l}}_{ij},\bm {S^{u}}_{ij})\;\; \forall (i,j),(j,i) \in E \nonumber \\
%
%
\mbox{\bf subject to:} \nonumber \\
%
& S^g_i - {\bm S^d_i} - (\bm Y^s_i)^* W_i = \sum_{\substack{(i,j)\in E}} S_{ij} + \sum_{\substack{(j,i)\in E}} S_{ji} \;\; \forall i\in N \label{eq:soc_e_3} \\ 
& S_{ij} = \left( \bm Y^*_{ij} - \bm i\frac{\bm {b^c}_{ij}}{2} \right)\frac{W_i}{|\bm{T}_{ij}|^2}  - \bm Y^*_{ij}\frac{W_{ij}}{\bm{T}^*_{ij}}  \;\; (i,j)\in E \label{eq:soc_e_4}  \\
& S_{ji} =  \left( \bm Y^*_{ij} - \bm i\frac{\bm {b^c}_{ij}}{2} \right) W_j - \bm Y^*_{ij}\frac{W_{ij}^*}{\bm{T}_{ij}} \;\; (i,j)\in E \label{eq:soc_e_5} \\
& |W_{ij}|^2 \leq W_i W_j  \;\; \forall (i,j) \in E \label{eq:soc_e_6} \\
& |S_{ij}| \leq \bm {s^u}_{ij} \;\; \forall (i,j),(j,i) \in E \label{eq:soc_e_8} \\
& \tan(-\bm {\theta^\Delta}_{ij}) \Re\left(W_{ij}\right) \leq \Im\left(W_{ij}\right) \leq \tan(\bm {\theta^\Delta}_{ij}) \Re\left(W_{ij}\right) \;\; \forall (i,j) \in E \label{eq:soc_e_2} 
\end{align}
\end{subequations}
\end{model}

\begin{lemma}
\label{lemma:soc_relax}
\refsocepf~is a relaxation of \refacepf.
\end{lemma}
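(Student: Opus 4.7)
\medskip

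\noindent\textbf{Proof plan.} The plan is to exhibit, for every feasible point of \refacepf, a corresponding feasible point of \refsocepf, thereby establishing the containment of feasible regions. Given a feasible $(S^g,V,S)$ for \refacepf, I would construct a candidate solution of \refsocepf~by keeping $S^g$ and $S$ unchanged and lifting the voltage variables through the definitions
\begin{align*}
W_i \;=\; |V_i|^2 \;\; \forall i \in N, \qquad W_{ij} \;=\; V_i V_j^* \;\; \forall (i,j) \in E.
\end{align*}
The remainder of the proof is a constraint-by-constraint verification that this lift satisfies every requirement of \refsocepf.

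\noindent First I would dispatch the easy constraints. The voltage bound $\bm{v^l}_i \le |V_i| \le \bm{v^u}_i$ in \eqref{eq:ac_e_1} immediately gives $W_i \in ((\bm{v^l}_i)^2,(\bm{v^u}_i)^2)$. Substituting $W_i = |V_i|^2$ into \eqref{eq:soc_e_3} recovers the AC KCL equation \eqref{eq:ac_e_3}, and substituting both $W_i = |V_i|^2$ and $W_{ij} = V_iV_j^*$ into \eqref{eq:soc_e_4}--\eqref{eq:soc_e_5} recovers the AC line power flow equations \eqref{eq:ac_e_4}--\eqref{eq:ac_e_5}. The thermal limit \eqref{eq:soc_e_8} coincides with \eqref{eq:ac_e_6} because $S_{ij}$ is unchanged.

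\noindent The SOC cone constraint \eqref{eq:soc_e_6} is the key step that motivates the lift. Under the chosen assignment,
\begin{align*}
|W_{ij}|^2 \;=\; |V_i V_j^*|^2 \;=\; |V_i|^2\,|V_j|^2 \;=\; W_i W_j,
\end{align*}
so \eqref{eq:soc_e_6} holds with equality; this is exactly the derivation displayed immediately before \eqref{eq:asvp_w} in the paper, and represents the one point at which \refsocepf~strictly drops information relative to \refacepf.

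\noindent Finally I would address the PAD constraint \eqref{eq:soc_e_2}. Since $W_{ij} = V_iV_j^*$, the quantities $\Re(W_{ij})$ and $\Im(W_{ij})$ coincide with $\Re(V_iV_j^*)$ and $\Im(V_iV_j^*)$, so \eqref{eq:soc_e_2} becomes exactly \eqref{eq:w_pad}, which is the linear reformulation of \eqref{eq:ac_e_7} already justified in the text (using the assumption $\bm{\theta^{\Delta}}_{ij} \in [0,\bm\pi/2)$). I anticipate this PAD step to be the main subtlety, as it is the only place where the equivalence is not a pure substitution but relies on the angle-to-linear conversion recalled around \eqref{eq:w_pad}; once that equivalence is invoked, the containment of feasible regions follows, and we conclude that \refsocepf~is a relaxation of \refacepf.
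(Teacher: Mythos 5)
Your proof is correct, but it takes a genuinely different route from the paper. The paper's entire proof of Lemma~\ref{lemma:soc_relax} is a one-line citation: it notes that a reduction from the SDP relaxation was first observed in the referenced prior work, and delegates the argument there. You instead give a self-contained constructive proof: lift a feasible point of \refacepf~via $W_i = |V_i|^2$ and $W_{ij} = V_iV_j^*$ (the paper's own equations \eqref{eq:w_link_1}--\eqref{eq:w_link_2}), keep $S^g$ and $S$ fixed, and check each constraint of \refsocepf~by substitution, with \eqref{eq:soc_e_6} holding at equality and \eqref{eq:soc_e_2} following from the angle-to-linear conversion \eqref{eq:w_pad}. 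What your approach buys is transparency and independence from the SDP machinery; what the paper's citation buys is brevity and a connection to the established relaxation hierarchy (SDP $\Rightarrow$ SOC). Your identification of the PAD step as the one non-substitution step is apt: the equivalence of \eqref{eq:ac_e_7} and \eqref{eq:w_pad} relies on $\Re(V_iV_j^*) \geq 0$, which is guaranteed here by the standing assumption $\bm{\theta^\Delta}_{ij} \in [0, \bm\pi/2)$ together with feasibility of the AC point, so no gap remains.
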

\begin{proof}
A reduction from the SDP relaxation was first observed in \cite{6345272}.
\end{proof}

\subsection{The Network Flow Relaxation (NF)}

The inspiration for the network flow relaxation is to produce a
relaxation that is similar to the linear flow models that are widely
studied in operations research and computer science
\cite{ahuja1993network}.  This section proposes Model
\ref{model:nf_pf_e} \refnfepf~as such an analogue.  The bulk of
  the operational constraints in this model are identical to
  \refsocepf, however the convex non-linear thermal limit constraint
  \eqref{eq:soc_e_8} is omitted in the interest of linearity.  The key
  deference in this model is that the line flow constraints are
  simplified in to the linear expression \eqref{eq:nf_e_3}, and no
  longer require the $W_{ij}$ variables.
  The resulting model is essentially a traditional network flow \cite{ahuja1993network} with two additional constraints to capture the line losses \eqref{eq:nf_e_3} and the phase angle differences \eqref{eq:nf_e_1}--\eqref{eq:nf_e_2}. 
  Observe that, when there is no line charging, constraints \eqref{eq:nf_e_3} simply express that the line losses must be nonnegative, i.e. power cannot be created on a line.

\begin{model}[t]
\caption{The Network Flow Relaxation of the Extended AC Power Flow (NF-E-PF)}
\label{model:nf_pf_e}
\begin{subequations}
\begin{align}
\mbox{\bf variables:} \nonumber \\
& S^g_i \in ( \bm {S^{gl}}_i, \bm {S^{gu}}_i) \;\; \forall i\in N \nonumber \\
& W_i \in ( (\bm {v^l}_i)^2, (\bm {v^u}_i)^2 ) \;\; \forall i\in N \nonumber \\
& S_{ij} \in (\bm {S^{l}}_{ij},\bm {S^{u}}_{ij})\;\; \forall (i,j),(j,i) \in E \nonumber \\
%
%
\mbox{\bf subject to:} \nonumber \\
%
%
& S^g_i - {\bm S^d_i} - (\bm Y^s_i)^* W_i = \sum_{\substack{(i,j)\in E}} S_{ij} + \sum_{\substack{(j,i)\in E}} S_{ji} \;\; \forall i\in N \label{eq:nf_e_7} \\
& S_{ij} +  S_{ji} \geq -\bm i \frac{\bm {b^c}_{ij}}{2} \left( \frac{W_i}{|\bm{T}_{ij}|^2} + W_j \right) \;\; (i,j)\in E \label{eq:nf_e_3} \\
%
& \tan(-\bm {\theta^\Delta}_{ij}) \Re\left( \bm Z^*_{ij}  \bm{T}^*_{ij} \left( \left( \bm Y^*_{ij} - \bm i\frac{\bm {b^c}_{ij}}{2} \right) \frac{W_i}{|\bm T_{ij}|^2} - S_{ij} \right) \right) \leq \nonumber \\ 
& \hspace{4.5cm} \Im\left( \bm Z^*_{ij}  \bm{T}^*_{ij} \left( \left( \bm Y^*_{ij} - \bm i\frac{\bm {b^c}_{ij}}{2} \right) \frac{W_i}{|\bm T_{ij}|^2} - S_{ij} \right)  \right) \;\; \forall (i,j) \in E \label{eq:nf_e_1}  \\
& \tan(\bm {\theta^\Delta}_{ij}) \Re\left( \bm Z^*_{ij}  \bm{T}^*_{ij} \left( \left( \bm Y^*_{ij} - \bm i\frac{\bm {b^c}_{ij}}{2} \right) \frac{W_i}{|\bm T_{ij}|^2} - S_{ij} \right) \right) \geq \nonumber \\ 
& \hspace{4.5cm} \Im\left( \bm Z^*_{ij}  \bm{T}^*_{ij} \left( \left( \bm Y^*_{ij} - \bm i\frac{\bm {b^c}_{ij}}{2} \right) \frac{W_i}{|\bm T_{ij}|^2} - S_{ij} \right)  \right) \;\; \forall (i,j) \in E \label{eq:nf_e_2}  
\end{align}
\end{subequations}
\end{model}

In the rest of this section, we demonstrate that \refnfepf~is a linear relaxation of \refsocepf.
Through a series of deductions we will show that \eqref{eq:nf_e_3} is simply a weaker version of \eqref{eq:soc_e_4}--\eqref{eq:soc_e_6}, demonstrating the relaxation property between these two models.
We begin by observing that the line losses in \refsocepf~are given by,
\begin{subequations}
\begin{align}
&  S_{ij} +  S_{ji} = 
\bm Y^*_{ij} \left( \frac{W_i}{ |\bm{T}_{ij}|^2} - \frac{W_{ij}}{\bm{T}^*_{ij}} - \frac{W^*_{ij}}{\bm{T}_{ij}} + W_j \right)
-  \bm i \frac{\bm {b^c}_{ij}}{2} \left(   \frac{W_i}{ |\bm{T}_{ij}|^2} + W_j \right) \label{eq:w_loss_e}
\end{align}
\end{subequations}

\begin{lemma}
\label{lemma:cm_pos}
\eqref{eq:asvp_w} ensures $ W_i / |\bm{T}_{ij}|^2 - W_{ij}/\bm{T}^*_{ij} - W^*_{ij}/\bm{T}_{ij} + W_j \geq 0$.
\end{lemma}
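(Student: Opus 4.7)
The plan is to rewrite the expression in a form that exposes a perfect square, using the SOC constraint \eqref{eq:asvp_w} as the main tool. First I would observe that the variable bounds in \refsocepf~guarantee $W_i \geq (\bm{v^l}_i)^2 \geq 0$ and $W_j \geq (\bm{v^l}_j)^2 \geq 0$, so that $\sqrt{W_i}$ and $\sqrt{W_j}$ are well-defined real numbers, and $|\bm T_{ij}|^2$ is a fixed positive constant.

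Next, since $W_i$ and $W_j$ are real, the two middle terms combine into a real part:
\begin{equation*}
\frac{W_{ij}}{\bm T^*_{ij}} + \frac{W^*_{ij}}{\bm T_{ij}} = 2\,\Re\!\left(\frac{W_{ij}}{\bm T^*_{ij}}\right).
\end{equation*}
So the target quantity equals $W_i/|\bm T_{ij}|^2 - 2\Re(W_{ij}/\bm T^*_{ij}) + W_j$. Applying $\Re(z) \leq |z|$ and the SOC constraint \eqref{eq:asvp_w} yields
\begin{equation*}
\Re\!\left(\frac{W_{ij}}{\bm T^*_{ij}}\right) \leq \frac{|W_{ij}|}{|\bm T_{ij}|} \leq \frac{\sqrt{W_i W_j}}{|\bm T_{ij}|} = \frac{\sqrt{W_i}}{|\bm T_{ij}|}\sqrt{W_j}.
\end{equation*}

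Finally, substituting this bound into the expression and completing the square gives
\begin{equation*}
\frac{W_i}{|\bm T_{ij}|^2} - 2\,\Re\!\left(\frac{W_{ij}}{\bm T^*_{ij}}\right) + W_j \;\geq\; \frac{W_i}{|\bm T_{ij}|^2} - 2\,\frac{\sqrt{W_i}}{|\bm T_{ij}|}\sqrt{W_j} + W_j \;=\; \left(\frac{\sqrt{W_i}}{|\bm T_{ij}|} - \sqrt{W_j}\right)^{\!2} \geq 0,
\end{equation*}
which is the claim. The only obstacle is bookkeeping with the complex conjugates and the transformer factor, and verifying that nonnegativity of $W_i, W_j$ (needed to take real square roots) follows from the variable bounds; once those are in place the SOC constraint delivers the perfect square almost mechanically.
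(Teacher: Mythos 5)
Your proof is correct and follows essentially the same route as the paper: bound $\Re\left(W_{ij}/\bm T^*_{ij}\right)$ by $\sqrt{W_i W_j}/|\bm T_{ij}|$ using the SOC constraint \eqref{eq:asvp_w}, then recognize the resulting lower bound as the perfect square $\left(\sqrt{W_i}/|\bm T_{ij}| - \sqrt{W_j}\right)^2$. The only cosmetic difference is that you invoke $\Re(z) \leq |z|$ directly, whereas the paper derives the same bound by expanding into real and imaginary components and discarding the nonnegative $\Im(\cdot)^2$ term.
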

\begin{proof}
First, observe that \eqref{eq:asvp_w} is equivalent to,
\begin{subequations}
\begin{align}
& \left| \frac{W_{ij}}{\bm{T}^*_{ij}} \right|^2 \leq \frac{W_i}{|\bm T_{ij}|^2} W_j \label{eq:asvp_w_t}
\end{align}
\end{subequations}
which is equivalent to the following,
\begin{subequations}
\begin{align}
 \Re \left( \frac{W_{ij}}{\bm{T}^*_{ij}} \right)^2 + \Im \left( \frac{W_{ij}}{\bm{T}^*_{ij}} \right)^2 &\leq \frac{W_i}{|\bm T_{ij}|^2} W_j \\
 \Re \left( \frac{W_{ij}}{\bm{T}^*_{ij}} \right)^2 &\leq \frac{W_i}{|\bm T_{ij}|^2} W_j - \Im \left( \frac{W_{ij}}{\bm{T}^*_{ij}} \right)^2 \\
 -\sqrt{\frac{W_i}{|\bm T_{ij}|^2} W_j - \Im \left( \frac{W_{ij}}{\bm{T}^*_{ij}} \right)^2} \leq \Re \left( \frac{W_{ij}}{\bm{T}^*_{ij}} \right) &\leq \sqrt{\frac{W_i}{|\bm T_{ij}|^2} W_j - \Im \left( \frac{W_{ij}}{\bm{T}^*_{ij}} \right)^2}
\end{align}
\end{subequations}
Noticing that $W_i, W_j, \Im(\cdot)^2, |\bm T_{ij}|^2$ are all positive, we can deduce the following bounds,
\begin{subequations}
\begin{align}
& -\sqrt{\frac{W_i}{|\bm T_{ij}|^2} W_j}  \leq  \Re \left( \frac{W_{ij}}{\bm{T}^*_{ij}} \right) \leq \sqrt{\frac{W_i}{|\bm T_{ij}|^2} W_j} \label{eq:w_r_bounds}
\end{align}
\end{subequations}
Now observe the equivalence,
\begin{subequations}
\begin{align}
& \frac{W_i}{ |\bm{T}_{ij}|^2} - \frac{W_{ij}}{\bm{T}^*_{ij}} - \frac{W^*_{ij}}{\bm{T}_{ij}} + W_j \Leftrightarrow  \frac{W_i}{ |\bm{T}_{ij}|^2} + W_j - 2 \Re \left( \frac{W_{ij}}{\bm{T}^*_{ij}} \right) \label{eq:w_curr}  
\end{align}
\end{subequations}
We want to determine the smallest possible value of \eqref{eq:w_curr}. Given that $W_i, W_j$ are strictly positive, the largest possible value of $\Re(W_{ij} / \bm{T}^*_{ij})$ will minimize \eqref{eq:w_curr}.  Utilizing \eqref{eq:w_r_bounds} as a bound for this value, we have that smallest possible value of \eqref{eq:w_curr} no smaller than,
\begin{subequations}
\begin{align}
& \frac{W_i}{ |\bm{T}_{ij}|^2} + W_j - 2 \sqrt{\frac{W_i}{|\bm T_{ij}|^2} W_j} 
\end{align}
\end{subequations}
which factors to,
\begin{subequations}
\begin{align}
& \left( \frac{\sqrt{W_i}}{|\bm{T}_{ij}|} - \sqrt{W_j} \right)^2
\end{align}
\end{subequations}
Given that the square of any expression is a positive number, the result follows.
\end{proof}

\begin{theorem}
\label{theorem:main_nf}
\eqref{eq:nf_e_3} is a valid redundant constraint in \refsocepf~when $\bm Z_{ij} \geq 0$.
\end{theorem}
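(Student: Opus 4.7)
The plan is to show that any $(S,W)$ feasible for \refsocepf{} automatically satisfies \eqref{eq:nf_e_3}, proceeding in three short steps.

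First, I would move the line-charging term to the left-hand side of \eqref{eq:nf_e_3}, obtaining the equivalent inequality $S_{ij}+S_{ji} + \bm i \frac{\bm{b^c}_{ij}}{2}\bigl(W_i/|\bm T_{ij}|^2 + W_j\bigr) \geq 0$. Adding \eqref{eq:soc_e_4} and \eqref{eq:soc_e_5} yields the line-loss expression \eqref{eq:w_loss_e}, and substituting it on the left-hand side cancels the shunt term exactly, reducing the claim to $\bm Y^*_{ij}\,Q \geq 0$, where $Q := W_i/|\bm T_{ij}|^2 - W_{ij}/\bm T^*_{ij} - W^*_{ij}/\bm T_{ij} + W_j$. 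Since $W_{ij}/\bm T^*_{ij} + W^*_{ij}/\bm T_{ij} = 2\Re(W_{ij}/\bm T^*_{ij})$, the quantity $Q$ is real, and Lemma~\ref{lemma:cm_pos} (driven by \eqref{eq:soc_e_6}) already gives $Q\geq 0$.

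Next, I would translate the hypothesis $\bm Z_{ij}\geq 0$ (read componentwise on the real and imaginary parts, i.e.\ $\Re(\bm Z_{ij}), \Im(\bm Z_{ij})\geq 0$) into a sign statement about $\bm Y^*_{ij}$. A short manipulation gives $\bm Y^*_{ij} = 1/\bm Z^*_{ij} = \bm Z_{ij}/|\bm Z_{ij}|^2$, so $\Re(\bm Y^*_{ij})$ and $\Im(\bm Y^*_{ij})$ inherit the (nonnegative) signs of $\Re(\bm Z_{ij})$ and $\Im(\bm Z_{ij})$. Multiplying the nonnegative real scalar $Q$ by a complex number with nonnegative real and imaginary parts produces a complex value with nonnegative real and imaginary parts, so $\bm Y^*_{ij}\,Q \geq 0$ holds in the same componentwise sense, which is exactly the rearranged form of \eqref{eq:nf_e_3}.

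The main subtlety, and essentially the only obstacle, is pinning down the componentwise interpretation of the complex inequality $\geq$ (it must apply separately to active and reactive power, since the right-hand side of \eqref{eq:nf_e_3} is purely imaginary) and keeping the conjugates straight when passing between $\bm Z_{ij}$, $\bm Y_{ij}$, and $\bm Y^*_{ij}$. Once that bookkeeping is in place, the theorem reduces to combining Lemma~\ref{lemma:cm_pos} with a single scalar-times-complex sign check; no further inequalities from \refsocepf{} are needed, which is why it is precisely the SOC cone \eqref{eq:soc_e_6} (and not, say, the thermal limit \eqref{eq:soc_e_8}) that drives the relaxation result.
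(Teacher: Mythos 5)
Your proposal is correct and follows essentially the same route as the paper: add \eqref{eq:soc_e_4} and \eqref{eq:soc_e_5} to get the line-loss identity \eqref{eq:w_loss_e}, invoke Lemma~\ref{lemma:cm_pos} for nonnegativity of the real quantity $W_i/|\bm T_{ij}|^2 - W_{ij}/\bm T^*_{ij} - W^*_{ij}/\bm T_{ij} + W_j$, and use $\bm Z_{ij}\geq 0 \Leftrightarrow \bm Y^*_{ij}\geq 0$ to conclude. You are somewhat more explicit than the paper about the componentwise reading of the complex inequality and about deriving $\bm Y^*_{ij}=\bm Z_{ij}/|\bm Z_{ij}|^2$, which the paper simply asserts.
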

\begin{proof}
$\bm Z_{ij} \geq 0 \Leftrightarrow \bm Y^*_{ij} \geq 0$, combining this fact with Lemma \ref{lemma:cm_pos} yields,
\begin{subequations}
\begin{align}
&  \bm Y^*_{ij} \left( \frac{W_i}{ |\bm{T}_{ij}|^2} - \frac{W_{ij}}{\bm{T}^*_{ij}} - \frac{W^*_{ij}}{\bm{T}_{ij}} + W_j \right) \geq 0 \label{eq:curr_mag_pos}
\end{align}
\end{subequations}
The result follows from combining \eqref{eq:curr_mag_pos} with the definition of line loss \eqref{eq:w_loss_e}.
\end{proof}

\begin{lemma}
\label{lemma:pad_const_equiv}
\eqref{eq:nf_e_1}--\eqref{eq:nf_e_2} are valid redundant constraints in \refsocepf.
\end{lemma}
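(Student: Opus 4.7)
The plan is to recognize that constraints \eqref{eq:nf_e_1}--\eqref{eq:nf_e_2} are just \eqref{eq:soc_e_2} rewritten by substituting away $W_{ij}$ using the Ohm's Law equation \eqref{eq:soc_e_4}. So the strategy is purely algebraic: solve \eqref{eq:soc_e_4} for $W_{ij}$, plug into the NF-form PAD constraints, and recover the SOC-form PAD constraints exactly.

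\textbf{Step 1: Invert \eqref{eq:soc_e_4} to isolate $W_{ij}$.} Starting from
\[
S_{ij} = \left( \bm Y^*_{ij} - \bm i\frac{\bm {b^c}_{ij}}{2} \right)\frac{W_i}{|\bm{T}_{ij}|^2}  - \bm Y^*_{ij}\frac{W_{ij}}{\bm{T}^*_{ij}},
\]
one rearranges to obtain $\bm Y^*_{ij} W_{ij}/\bm{T}^*_{ij}$ on one side, then multiplies by $\bm{T}^*_{ij}/\bm Y^*_{ij}$. Using the defining identity $\bm Z_{ij} = 1/\bm Y_{ij}$ (so $\bm Z^*_{ij} = 1/\bm Y^*_{ij}$), this yields the closed form
\[
W_{ij} \;=\; \bm Z^*_{ij}\,\bm{T}^*_{ij}\!\left( \left( \bm Y^*_{ij} - \bm i\frac{\bm {b^c}_{ij}}{2} \right)\frac{W_i}{|\bm T_{ij}|^2} - S_{ij} \right).
\]
This is exactly the quantity that appears inside $\Re(\cdot)$ and $\Im(\cdot)$ in constraints \eqref{eq:nf_e_1}--\eqref{eq:nf_e_2}.

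\textbf{Step 2: Substitute and compare.} Replacing the bracketed expression in \eqref{eq:nf_e_1}--\eqref{eq:nf_e_2} by $W_{ij}$, both constraints collapse to
\[
\tan(-\bm {\theta^\Delta}_{ij}) \Re\!\left(W_{ij}\right) \;\leq\; \Im\!\left(W_{ij}\right) \;\leq\; \tan(\bm {\theta^\Delta}_{ij}) \Re\!\left(W_{ij}\right),
\]
which is precisely \eqref{eq:soc_e_2}. Hence \eqref{eq:nf_e_1}--\eqref{eq:nf_e_2} are implied by \eqref{eq:soc_e_4} together with \eqref{eq:soc_e_2}, and so they are valid and redundant in \refsocepf.

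There is no real obstacle here; the only thing to keep straight is that $\bm Z^*_{ij}\bm{T}^*_{ij}$ is exactly the right prefactor to convert the NF-form expression into $W_{ij}$, which comes down to $\bm Z = 1/\bm Y$ and careful bookkeeping with the conjugate transformer term $\bm{T}^*_{ij}$. Once that is observed, the lemma reduces to a one-line substitution.
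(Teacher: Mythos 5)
Your proof is correct and follows essentially the same route as the paper: the paper's (one-line) argument is to "factor \eqref{eq:soc_e_4} similarly to \eqref{eq:vv_factor_e}" — i.e., solve for $W_{ij}$ exactly as you do in Step 1 — and then combine with \eqref{eq:soc_e_2}. You have simply written out the substitution the paper leaves implicit.
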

\begin{proof}
Begin by factoring \eqref{eq:soc_e_4} similarly to \eqref{eq:vv_factor_e}.
Then it is easy to observe that \eqref{eq:nf_e_1}--\eqref{eq:nf_e_2} can be derived by combining \eqref{eq:soc_e_2}--\eqref{eq:soc_e_4} in \refsocepf.
\end{proof}

\begin{corollary}
\label{corollary:model}
\refnfepf~is a relaxation of \refsocepf.
\end{corollary}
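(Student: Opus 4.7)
The plan is to establish the corollary by a routine projection argument: show that any feasible solution of \refsocepf, projected onto the variable tuple $(S^g, W, S)$ that it shares with \refnfepf, satisfies every constraint of \refnfepf. Since the variable bounds for $S^g_i$, $W_i$, and $S_{ij}$ coincide in the two models, and \refnfepf~uses no additional variables, this projection is the only thing to check. The only variable dropped is $W_{ij}$; there is nothing to reconstruct.

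I would proceed constraint by constraint. The KCL equation \eqref{eq:nf_e_7} is syntactically identical to \eqref{eq:soc_e_3}, so it carries over with no work. The line-loss inequality \eqref{eq:nf_e_3} is precisely the content of Theorem \ref{theorem:main_nf}, which already proves it to be a valid redundant constraint in \refsocepf~under the standing physical assumption $\bm Z_{ij} \geq 0$. The phase-angle difference constraints \eqref{eq:nf_e_1}--\eqref{eq:nf_e_2} are exactly what Lemma \ref{lemma:pad_const_equiv} derives from \eqref{eq:soc_e_2} together with \eqref{eq:soc_e_4}. Hence all three non-trivial constraint families of \refnfepf~are implied by the feasibility of the corresponding \refsocepf~point.

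Combining these observations, any \refsocepf-feasible tuple $(S^g, W, W_{ij}, S)$ yields, after discarding $W_{ij}$, a tuple satisfying every constraint of \refnfepf. This is precisely the definition of \refnfepf~being a relaxation of \refsocepf, and the corollary follows.

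There is essentially no obstacle remaining: the substantive content was already absorbed into Theorem \ref{theorem:main_nf} (eliminating the $W_{ij}$ variables from the line-loss expression while preserving validity) and Lemma \ref{lemma:pad_const_equiv} (rewriting the PAD cone in $S$-space). The corollary itself is bookkeeping that assembles those two redundancy results with the trivial observation that the KCL constraint and the variable bounds are unchanged. The only point deserving explicit mention is the invocation of $\bm Z_{ij} \geq 0$, which should be flagged as the standing assumption on line impedance in physical transmission systems.
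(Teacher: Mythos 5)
Your proposal is correct and matches the paper's (implicit) argument: the corollary is exactly the assembly of Theorem \ref{theorem:main_nf} and Lemma \ref{lemma:pad_const_equiv} with the observation that the KCL constraint and variable bounds are unchanged and that dropping $W_{ij}$ and the thermal-limit constraint only enlarges the feasible set. Your explicit flagging of the standing assumption $\bm Z_{ij} \geq 0$ is appropriate, as the paper itself notes this restriction only later.
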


\subsection{The Copper Plate Relaxation (CP)}

The inspiration for this relaxation is to develop a version of the
classic copper plate approximations used in power system analysis,
which simply state that supply and demand should be balanced
throughout the network.  This section proposes Model
\ref{model:cp_pf_e} \refcpepf~as {\em the relaxation version of a
  copper plate approximation}.  All of the operational constraints in
this model are identical to \refnfepf, however the constraints and
variables relating line flows ($S_{ij}$) have been eliminated.
Additionally, all of the KCL constraints have been combined into one
simple linear expression \eqref{eq:cp_e_1}.  A key benefit of this
relaxation is that it is incredibly simple and scalable, while
  still capturing some properties of the lines.
In the rest of this section we demonstrate that \refcpepf~is a relaxation of \refnfepf.

\begin{model}[t]
\caption{The Copper Plate Relaxation of the Extended AC Power Flow (CP-E-PF)}
\label{model:cp_pf_e}
\begin{subequations}
\begin{align}
\mbox{\bf variables:} \nonumber \\
& S^g_i \in ( \bm {S^{gl}}_i, \bm {S^{gu}}_i) \;\; \forall i\in N \nonumber \\
& W_i \in ( (\bm {v^l}_i)^2, (\bm {v^u}_i)^2 ) \;\; \forall i\in N \nonumber \\
%
%
\mbox{\bf subject to:} \nonumber \\
%
& \sum_{i \in N} S^g_i - \sum_{i \in N} {\bm S^d_i} - \sum_{i \in N} (\bm Y^s_i)^* W_i \geq \sum_{\substack{(i,j)\in E }}  -\bm i \frac{\bm {b^c}_{ij}}{2} \left( \frac{W_i}{|\bm{T}_{ij}|^2} + W_j \right) \label{eq:cp_e_1}
%
\end{align}
\end{subequations}
\end{model}


\begin{theorem}
\label{theorem:main_nf}
\eqref{eq:cp_e_1} is a valid redundant constraint in \refnfepf.
\end{theorem}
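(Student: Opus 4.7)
The plan is to obtain \eqref{eq:cp_e_1} by aggregating the per-bus KCL constraints \eqref{eq:nf_e_7} across all buses and then applying the line-loss inequality \eqref{eq:nf_e_3} edge by edge. Concretely, I would first sum \eqref{eq:nf_e_7} over $i \in N$, which yields
\begin{equation*}
\sum_{i \in N} S^g_i - \sum_{i \in N} \bm S^d_i - \sum_{i \in N} (\bm Y^s_i)^* W_i \;=\; \sum_{i \in N}\Bigl(\sum_{(i,j)\in E} S_{ij} + \sum_{(j,i)\in E} S_{ji}\Bigr).
\end{equation*}

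The key combinatorial observation is that each undirected edge appears exactly once in the outer sum at its \emph{from} endpoint (contributing $S_{ij}$) and exactly once at its \emph{to} endpoint (contributing $S_{ji}$). Hence the right-hand side telescopes to $\sum_{(i,j)\in E}\bigl(S_{ij} + S_{ji}\bigr)$. Next, I would apply \eqref{eq:nf_e_3} termwise and sum over edges, obtaining
\begin{equation*}
\sum_{(i,j)\in E}\bigl(S_{ij} + S_{ji}\bigr) \;\geq\; \sum_{(i,j)\in E} -\bm i \frac{\bm{b^c}_{ij}}{2}\Bigl(\frac{W_i}{|\bm T_{ij}|^2} + W_j\Bigr).
\end{equation*}
Chaining this inequality with the aggregated KCL equality gives exactly \eqref{eq:cp_e_1}, so it is implied by the constraints of \refnfepf\ and therefore redundant.

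I do not expect any real obstacle here: the proof is a one-step aggregation plus a telescoping of the edge sums. The only point requiring mild care is the bookkeeping that the $\sum_{(i,j)\in E}$ and $\sum_{(j,i)\in E}$ notations in \eqref{eq:nf_e_7} together enumerate each edge once in each orientation when summed over $N$, so that no edge is double-counted or omitted before invoking \eqref{eq:nf_e_3}.
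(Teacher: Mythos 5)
Your proposal is correct and matches the paper's own proof essentially step for step: the paper likewise sums the per-bus KCL constraints over $i \in N$, collapses the right-hand side to $\sum_{(i,j)\in E}(S_{ij}+S_{ji})$, and then applies the line-loss inequality \eqref{eq:nf_e_3} to eliminate the $S$ variables. Your explicit remark about each edge being counted once at its \emph{from} endpoint and once at its \emph{to} endpoint is the same bookkeeping the paper performs implicitly.
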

\begin{proof}
Due to the conjunctive nature of mathematical programs, combining constraints yields redundant constraints.  
As a first step, all of the KCL constraints \eqref{eq:soc_e_3} are combined together.
\begin{subequations}
\begin{align}
& \sum_{i \in N} \left( S^g_i - {\bm S^d_i} - (\bm Y^s_i)^* W_i = \sum_{\substack{(i,j)\in E}} S_{ij} + \sum_{\substack{(j,i)\in E}} S_{ji} \right) \\
& \sum_{i \in N} S^g_i -  \sum_{i \in N}  {\bm S^d_i} -  \sum_{i \in N}  (\bm Y^s_i)^* W_i = \sum_{i \in N} \left(  \sum_{\substack{(i,j)\in E}} S_{ij} + \sum_{\substack{(j,i)\in E}} S_{ji} \right) \\
& \sum_{i \in N} S^g_i -  \sum_{i \in N}  {\bm S^d_i} -  \sum_{i \in N}  (\bm Y^s_i)^* W_i = \sum_{\substack{(i,j)\in E}} S_{ij} + S_{ji}
\end{align}
\end{subequations}
Observe that line loss constraints \eqref{eq:nf_e_3} can be used to eliminate the $S$ variables from this expression entirely, yielding,
\begin{subequations}
\begin{align}
& \sum_{i \in N} S^g_i -  \sum_{i \in N}  {\bm S^d_i} -  \sum_{i \in N}  (\bm Y^s_i)^* W_i \geq \sum_{\substack{(i,j)\in E}}  -\bm i \frac{\bm {b^c}_{ij}}{2} \left( \frac{W_i}{|\bm{T}_{ij}|^2} + W_j \right)
\end{align}
\end{subequations}
which demonstrates the result.
\end{proof}

\begin{corollary}
\label{corollary:model}
\refcpepf~is a relaxation of \refnfepf.
\end{corollary}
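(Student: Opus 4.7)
The plan is a standard projection-and-verification argument for relaxations. Given any feasible assignment $(S^g, W, S)$ of \refnfepf, I would project onto the smaller variable space of \refcpepf\ by discarding the line flow variables $S_{ij}$ and retaining only $(S^g, W)$. To establish that \refcpepf\ is a relaxation, it suffices to show this projected pair is feasible in \refcpepf, since that inclusion immediately implies the \refcpepf\ feasible region contains the projection of the \refnfepf\ feasible region.

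For feasibility of the projection, I would check each constraint of \refcpepf\ in turn. The variable bound constraints $S^g_i \in (\bm{S^{gl}}_i, \bm{S^{gu}}_i)$ and $W_i \in ((\bm{v^l}_i)^2, (\bm{v^u}_i)^2)$ appear identically in both models, so they transfer without modification. The only substantive constraint remaining in \refcpepf\ is the aggregated inequality \eqref{eq:cp_e_1}, and the preceding Theorem already establishes this as a valid redundant constraint of \refnfepf. Consequently, any $(S^g, W, S)$ feasible for \refnfepf\ automatically yields an $(S^g, W)$ satisfying \eqref{eq:cp_e_1}, which completes the verification.

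I anticipate no real obstacle: the corollary is essentially a packaging of the Theorem. The only subtle point worth stating explicitly is that the $S_{ij}$ variables can indeed be dropped without consequence, because they appear in no constraint of \refcpepf\ and so leave no residual feasibility obligation after projection. With that remark in place, the corollary follows directly.
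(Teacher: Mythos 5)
Your argument is correct and matches the paper's reasoning exactly: the corollary is stated without a separate proof precisely because, as you observe, it is a direct packaging of the preceding theorem together with the fact that \refcpepf\ shares the $S^g$ and $W$ variable bounds with \refnfepf\ and imposes no constraint on the discarded $S_{ij}$ variables. Your explicit remark about the projection leaving no residual feasibility obligation is a correct and slightly more careful statement of what the paper leaves implicit.
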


\noindent
Note \refcpepf~includes all of the assumptions of \refnfepf~and hence can only be applied when $\bm Z_{ij} \geq 0 \;\forall (i,j) \in E$.

\section{Comparison of Linear Relaxations}
\label{sec:lin_relax}

One of the key benefits of both \refnfepf~and \refcpepf~is that they are linear relaxations of \refacepf.
Although linear approximations of \refacepf~are quite common \cite{Stott_2009bb,LPAC_ijoc,IVModel}, linear {\em relaxations} of \refacepf~are, to the best of our knowledge, limited to two previous works \cite{5772044,linear_opf}, which we review in detail.  

There are two key differences of this work and \cite{linear_opf}.  First, \cite{linear_opf} focuses on only the rectangular real number formulation of \refacepf~and all of the proofs follow from that formulation.  This work uses complex numbers directly for the proofs, hence the results apply to any real number realization of these models.  Second, the primary goal of \cite{linear_opf} is to propose valid linear inequalities of non-convex constraints for iteratively strengthening relaxations of the \refacepf, while this work focuses on building a intuitive static model.  Indeed, the results of \cite{linear_opf} may be used in consort with this work, to iteratively strengthen the linear relaxations proposed herein.

The most closely related work to this is \cite{5772044}, which proposes a relaxation along the lines of \refnfepf.  The key difference is that instead of constraints \eqref{eq:nf_e_3}, \cite{5772044} uses the following valid equalities,
\begin{subequations}
\begin{align}
\Re\left( (\bm b_{ij} + \bm i \bm g_{ij}) \left(S_{ij} + S_{ji} \right) \right) &= -\bm g_{ij}\frac{\bm {b^c}_{ij}}{2} \left( \frac{W_i}{|\bm T_{ij}|^2}+W_j \right) \;\; \forall (i,j) \in E \\
\Re\left( \bm Y_{ij} \left(S_{ij} - S_{ji} \right) \right) &= \left(|\bm Y_{ij}|^2 + \bm b_{ij}\frac{\bm {b^c}_{ij}}{2} \right) \left( \frac{W_i}{|\bm T_{ij}|^2}-W_j \right)  \;\; \forall (i,j) \in E
\end{align}
\end{subequations}
Because these constraints are equalities (not inequalities), it is difficult to develop a strong theoretical connection to the models proposed herein.  However, as will be demonstrated in an example, the linear relaxation proposed in \cite{5772044} can generate power on the lines, which is a significant disadvantage over the models proposed here.
In the rest of of the paper TH will be used to denote the linear relaxation proposed in \cite{5772044}.

\subsection{An Illustrative Example}
\label{sec:example}

\begin{figure}[t]
\center
    \includegraphics[width=4.0cm]{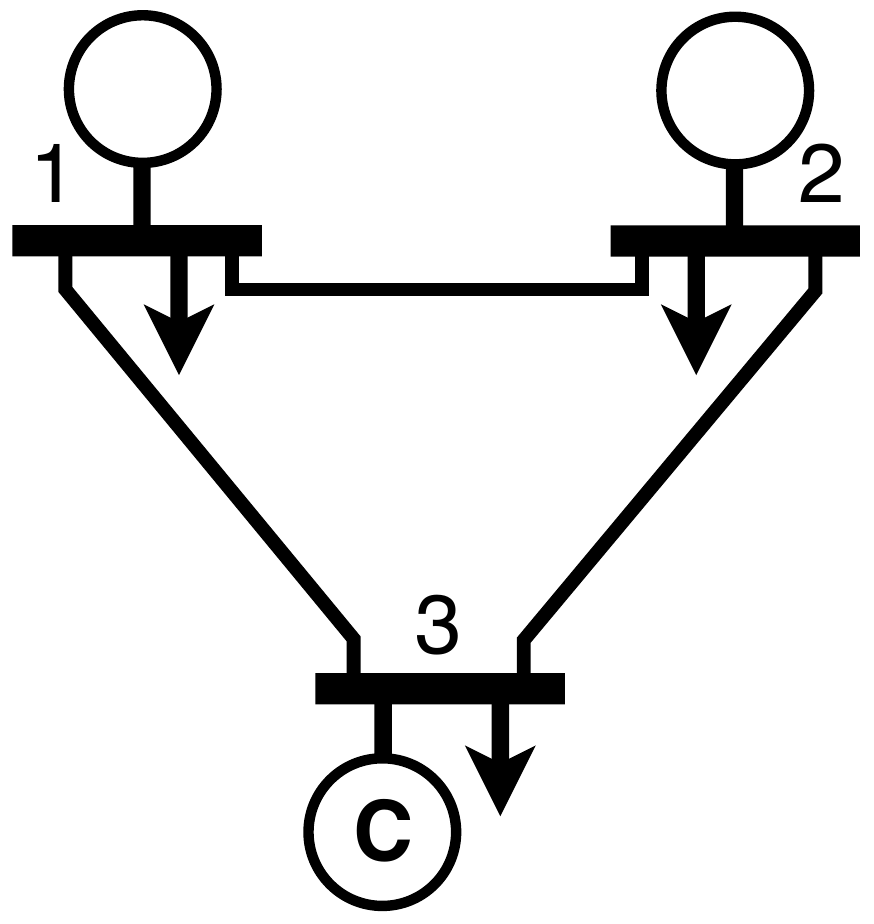} 
\caption{3-Bus Example Network Diagram.}
\label{fig:3_bus_example}
\end{figure}

\begin{table}[t]
\centering
\begin{tabular}{|c||c|c|c|c|}
\hline
\multicolumn{5}{|c|}{Bus Parameters} \\
\hline
Bus & $\boldsymbol {p^d}$ & $\boldsymbol {q^d}$ & $\boldsymbol {v^l}$ & $\boldsymbol {v^u}$ \\
\hline
1 & 110 & 40 & 0.9 & 1.1 \\ 
\hline 
2 & 110 & 40 & 0.9 & 1.1 \\ 
\hline 
3 & 95 & 50 & 0.9 & 1.1 \\ 
\hline 
\end{tabular}
\vspace{0.5cm}
\begin{tabular}{|c||c|c|c|c|c|}
\hline
\multicolumn{6}{|c|}{Line Parameters} \\
\hline
From--To Bus & $\boldsymbol {r}$ & $\boldsymbol {x}$ & $\boldsymbol {b^c}$ & $\boldsymbol {s^u}$ & $\bm {\theta^\Delta}$ \\
\hline
1--2 & 0.042 & 0.90 & 0.30 & $\infty$ & $30^\circ$ \\ 
\hline 
2--3 & 0.025 & 0.75 & 0.70 & 50 & $30^\circ$ \\ 
\hline 
1--3 & 0.065 & 0.62 & 0.45 & $\infty$ & $30^\circ$ \\ 
\hline 
\end{tabular}
\begin{tabular}{|c||c|c||c|c|c|}
\hline
\multicolumn{6}{|c|}{Generator Parameters} \\
\hline
Generator & $\boldsymbol {p^{gl}}, \boldsymbol {p^{gu}}$ & $\boldsymbol {q^{gl}}, \boldsymbol {q^{gu}}$ & $\boldsymbol {c_2}$ & $\boldsymbol {c_1}$ & $\boldsymbol {c_0}$ \\
\hline
1 & $0,\infty$ & $-\infty,\infty$ & 0.110 & 5.0 & 0 \\ 
\hline 
2 & $0,\infty$ & $-\infty,\infty$ & 0.085 & 1.2 & 0 \\ 
\hline 
3 & $0,0$ & $-\infty,\infty$ & 0 & 0 & 0 \\ 
\hline 
\end{tabular}
\caption{Three-Bus System Network Data (100 MVA Base).}
\label{tbl:3_bus_network_data}
\end{table}

This section illustrates all of the power flow relaxations considered
here on the 3-bus network from \cite{6120344}, which has proven to be
an excellent test case for power flow relaxations. This system is
depicted in Figure \ref{fig:3_bus_example} and the associated network
parameters are given in Table \ref{tbl:3_bus_network_data}.  This
network is designed to have very few binding constraints.  Hence, the
generator and line limits are set to large non-binding values, except
for the thermal limit constraint on the line between buses 2 and 3,
which is set to 50 MVA. In addition to its base configuration, we also
consider this network with reduced phase angle difference bounds of
$18$ degrees.  The nonlinear global optimization solver {\sc Couenne}
\cite{belotti2009couenne} is used to find the globally optimal
solution to \refacepf~and the {\em optimally gap} between this
solution and a relaxation is computed using the formula,
\begin{align}
\frac{\text{Heuristic} - \text{Relaxation}}{\text{Heuristic}}. \nonumber
\end{align}
Hence, the smaller the optimality gap the better the relaxation.

\begin{table}[t]
\centering
\begin{tabular}{|r||r||r|r|r|r|r|r|r|r|r||r|r|r|r|c|c|}
\hline
                 & \$/h & \multicolumn{4}{c|}{Optimality Gap (\%)}  \\
                 &       & \cite{Jabr06} \hspace{0.05cm} &       &       &\cite{5772044} \\
Test Case & AC & SOC & NF & CP & TH \\
\hline  
\hline  
Base & {\bf 5812} & 1.32 & 2.99 & 2.99 & 87.2 \\
\hline 
 $ {\theta^\Delta} \! = \!18^\circ$ & {\bf 5992} & 4.28 & 5.90 & 5.90 & 87.5 \\
\hline 
\end{tabular}
\caption{AC-OPF Bounds using Relaxations on the 3-Bus Case.}
\label{tbl:3_bus_results}
\end{table}

Table \ref{tbl:3_bus_results} summarizes the results. As expected,
in both cases the SOC relaxation has the smallest optimality gap. 
Further relaxing the model to NF or CP, increases the gap by about $1.5\%$.
In this example the NF and CP produce the same optimally gap, however that is not the case in general.
The TH relaxation has an optimality gap of over 80\% indicating that it is significantly weaker than the 
other linear relaxations considered here.

\section{Conclusion}
\label{sec:conc}

This paper has developed two intuitive linear relaxations of AC power flow for transmission systems, one based on network flows (NF) and another inspired by copper plate approximations (CP).
Although it was shown that both models are weaker than the established nonlinear SOC relaxation, their linearity brings significant performance and scalability benefits.
Combining these linear relaxations with the established nonlinear relaxations (SDP \cite{Bai2008383}, QC \cite{QCarchive}, SOC \cite{Jabr06}) provides a rich variety of tradeoffs between the relaxation accuracy and scalability.
Given that all of these power flow relaxations span a wide range of mathematical programs and associated solution methods, the natural frontier for future work is to conduct a detailed numerical study to better understand the time and quality tradeoff of each relaxation.

\section{Acknowledgements}

NICTA is funded by the Australian Government through the Department of
Communications and the Australian Research Council through the ICT
Centre of Excellence Program.

\bibliographystyle{plain}
\bibliography{../power_models}

\begin{thebibliography}{10}

\bibitem{ahuja1993network}
R.K. Ahuja, T.L. Magnanti, and J.B. Orlin.
\newblock {\em Network flows: theory, algorithms, and applications}.
\newblock Prentice Hall, 1993.

\bibitem{Bai2008383}
Xiaoqing Bai, Hua Wei, Katsuki Fujisawa, and Yong Wang.
\newblock Semidefinite programming for optimal power flow problems.
\newblock {\em International Journal of Electrical Power \& Energy Systems},
  30(6–7):383 -- 392, 2008.

\bibitem{belotti2009couenne}
Pietro Belotti.
\newblock Couenne: User manual.
\newblock Published online at \url{https://projects.coin-or.org/Couenne/},
  2009.
\newblock Accessed: 10/04/2015.

\bibitem{linear_opf}
Daniel Bienstock and Gonzalo Munoz.
\newblock {On linear relaxations of OPF problems}.
\newblock {\em CoRR}, abs/1411.1120, 2014.

\bibitem{LPAC_ijoc}
Coffrin Carleton and Pascal Van~Hentenryck.
\newblock A linear-programming approximation of ac power flows.
\newblock {\em Forthcoming in INFORMS Journal on Computing}, 2014.

\bibitem{6102366}
M.~Farivar, C.R. Clarke, S.H. Low, and K.M. Chandy.
\newblock Inverter var control for distribution systems with renewables.
\newblock In {\em 2011 IEEE International Conference on Smart Grid
  Communications (SmartGridComm)}, pages 457--462, Oct 2011.

\bibitem{gurobi}
{Gurobi Optimization, Inc.}
\newblock Gurobi optimizer reference manual.
\newblock Published online at \url{http://www.gurobi.com}, 2014.

\bibitem{QCarchive}
H.~Hijazi, C.~Coffrin, and P.~Van~Hentenryck.
\newblock Convex quadratic relaxations of mixed-integer nonlinear programs in
  power systems.
\newblock Published online at
  \url{http://www.optimization-online.org/DB_HTML/2013/09/4057.html}, 2013.

\bibitem{cplex}
Inc. IBM.
\newblock {IBM ILOG CPLEX Optimization Studio}.
\newblock
  \url{http://www-01.ibm.com/software/commerce/optimization/cplex-optimizer/},
  2014.

\bibitem{Jabr06}
R.A. Jabr.
\newblock Radial distribution load flow using conic programming.
\newblock {\em IEEE Transactions on Power Systems}, 21(3):1458--1459, Aug 2006.

\bibitem{9780070359581}
Prabha Kundur.
\newblock {\em Power System Stability and Control}.
\newblock McGraw-Hill Professional, 1994.

\bibitem{5971792}
J.~Lavaei and S.H. Low.
\newblock Zero duality gap in optimal power flow problem.
\newblock {\em IEEE Transactions on Power Systems}, 27(1):92 --107, feb. 2012.

\bibitem{ACSTAR2015}
Karsten Lehmann, Alban Grastien, and Pascal Van~Hentenryck.
\newblock {AC-Feasibility on Tree Networks is NP-Hard}.
\newblock {\em IEEE Transactions on Power Systems}, 2015 (to appear).

\bibitem{6120344}
B.C. Lesieutre, D.K. Molzahn, A.R. Borden, and C.L. DeMarco.
\newblock Examining the limits of the application of semidefinite programming
  to power flow problems.
\newblock In {\em 49th Annual Allerton Conference on Communication, Control,
  and Computing (Allerton), 2011}, pages 1492 --1499, sept. 2011.

\bibitem{6822653}
R.~Madani, S.~Sojoudi, and J.~Lavaei.
\newblock Convex relaxation for optimal power flow problem: Mesh networks.
\newblock {\em IEEE Transactions on Power Systems}, 30(1):199--211, Jan 2015.

\bibitem{7038397}
D.K. Molzahn and I.A. Hiskens.
\newblock Moment-based relaxation of the optimal power flow problem.
\newblock In {\em Power Systems Computation Conference (PSCC), 2014}, pages
  1--7, Aug 2014.

\bibitem{6980142}
D.K. Molzahn and I.A. Hiskens.
\newblock Sparsity-exploiting moment-based relaxations of the optimal power
  flow problem.
\newblock {\em Power Systems, IEEE Transactions on}, PP(99):1--13, 2014.

\bibitem{IVModel}
Richard~P. O'Neill, Anya Castillo, and Mary~B. Cain.
\newblock The iv formulation and linear approximations of the ac optimal power
  flow problem.
\newblock Published online at
  \url{http://www.ferc.gov/industries/electric/indus-act/market-planning/opf-papers/acopf-2-iv-linearization.pdf},
  December 2012.
\newblock Accessed: 18/11/2013.

\bibitem{Purchala:2005gt}
K~Purchala, L~Meeus, D~Van~Dommelen, and R~Belmans.
\newblock {Usefulness of {DC} power flow for active power flow analysis}.
\newblock {\em Power Engineering Society General Meeting}, pages 454--459,
  2005.

\bibitem{6345272}
S.~Sojoudi and J.~Lavaei.
\newblock Physics of power networks makes hard optimization problems easy to
  solve.
\newblock In {\em Power and Energy Society General Meeting, 2012 IEEE}, pages
  1--8, July 2012.

\bibitem{Stott_2009bb}
B~Stott, J~Jardim, and O~Alsac.
\newblock Dc power flow revisited.
\newblock {\em IEEE Transactions on Power Systems}, 24(3):1290--1300, 2009.

\bibitem{5772044}
J.A. Taylor and F.S. Hover.
\newblock Linear relaxations for transmission system planning.
\newblock {\em IEEE Transactions on Power Systems}, 26(4):2533 --2538, nov.
  2011.

\bibitem{mosek}
K.~C. Toh, R.~H. TŸtŸncŸ, and M.~J. Todd.
\newblock {SDPT3 - a MATLAB software package for semidefinite-quadratic-linear
  programming}.
\newblock \url{https://mosek.com/}, 2014.

\bibitem{verma2009power}
Abhinav Verma.
\newblock {\em Power grid security analysis: An optimization approach}.
\newblock PhD thesis, Columbia University, 2009.

\bibitem{matpower}
R.D. Zimmerman, C.E. Murillo-S‡~andnchez, and R.J. Thomas.
\newblock Matpower: Steady-state operations, planning, and analysis tools for
  power systems research and education.
\newblock {\em IEEE Transactions on Power Systems}, 26(1):12 --19, feb. 2011.

\end{thebibliography}

\appendix

\section{Real Number Formulations}

In the interest of clean proofs, this document focuses on a complex number representation of the relaxations.
In practice however, the a real number representation is often needed for implementation.
This section presents the real number version of \refnfepf~and \refcpepf~to aid the implementations of these models.

First we develop a real number short-hand for $\bm Z_{ij} \bm{T}_{ij}$ as follows,
\begin{subequations}
\begin{align}
& \Re \left( \bm Z_{ij} \bm{T}_{ij} \right) = \bm {tz^R}_{ij}  = \bm r_{ij} \bm {t^R}_{ij} -\bm x_{ij} \bm {t^I}_{ij} \;\; (i,j) \in E \\
& \Im \left( \bm Z_{ij} \bm{T}_{ij} \right) = \bm {tz^I}_{ij}  = \bm r_{ij} \bm {t^I}_{ij} + \bm x_{ij} \bm {t^R}_{ij} \;\; (i,j) \in E
\end{align}
\end{subequations}
Notice that the superscripts $R,I$ are used for the real and imaginary parts of a complex number respectively.
Expanding \eqref{eq:vv_factor_e} using this short-hand into real and imaginary components $\Re(V_iV^*_j), \Im(V_iV^*_j)$ we have,
\begin{subequations}
\begin{align}
& \Re(V_iV^*_j) = \left( \bm {t^R}_{ij} - \bm {tz^I}_{ij} \left( \frac{ \bm {b^c}_{ij}}{2} \right) \right) \frac{v^2_i}{\bm t^2_{ij}} - \bm {tz^R}_{ij}p_{ij} - \bm {tz^I}_{ij}q_{ij}   \;\; (i,j) \in E \\
& \Im(V_iV^*_j) = \left( -\bm {t^I}_{ij} - \bm {tz^R}_{ij} \left( \frac{ \bm {b^c}_{ij}}{2} \right) \right) \frac{v^2_i}{\bm t^2_{ij}} - \bm {tz^R}_{ij}q_{ij} + \bm {tz^I}_{ij}p_{ij}   \;\; (i,j) \in E
\end{align}
\end{subequations}
both of which are used to implement the phase angle difference constraints in any model with $v,p,q$ variables.  
The complete real number implementation of \refnfepf~and \refcpepf~are presented in Model \ref{model:nf_pf_e_real} and Model \ref{model:cp_pf_e_real} respectively.

\begin{model}[t]
\caption{The Network Flow Relaxation of the Extended AC Power Flow  \refnfepf~in Real Numbers.}
\label{model:nf_pf_e_real}
\begin{subequations}
\begin{align}
\mbox{\bf variables: \hspace{-1.3cm} } \nonumber \\
& p^g_i \in (\bm {p^{gl}}_i, \bm {p^{gu}}_i) \;\; \forall i\in N \nonumber \\
& q^g_i \in (\bm {q^{gl}}_i, \bm {q^{gu}}_i) \;\; \forall i\in N \nonumber \\
& w_i \in ((\bm {v^l}_i)^2, (\bm {v^u}_i)^2) \;\; \forall i\in N \nonumber \\
& p_{ij} \in (-\bm {s^u}_{ij}, \bm {s^u}_{ij}) \;\; \forall (i,j),(j,i) \in E \nonumber \\
& q_{ij} \in (-\bm {s^u}_{ij}, \bm {s^u}_{ij}) \;\; \forall (i,j),(j,i) \in E \nonumber \\
\mbox{\bf subject to: \hspace{-1.3cm} } \nonumber \\
%
%
%
& p^g_i - {\bm p^d_i} - \bm g^s_{i} w_i = \sum_{\substack{(i,j)\in E}} p_{ij} + \sum_{\substack{(j,i)\in E}} p_{ji} \;\; \forall i\in N \\ 
& q^g_i - {\bm q^d_i} + \bm b^s_{i} w_i = \sum_{\substack{(i,j)\in E}} q_{ij} + \sum_{\substack{(j,i)\in E}} q_{ji} \;\; \forall i\in N \\ 
&p_{ij} + p_{ji} \geq 0 \;\; (i,j) \in E \\
&q_{ij} + q_{ji} \geq -\frac{\bm {b^c}_{ij}}{2} \left( \frac{w_i}{\bm {t}^2_{ij}} +  w_j \right) \;\; (i,j) \in E \\
%
%
& \tan(-\bm {\theta^\Delta}) \left( \left( \bm {t^R}_{ij} - \bm {tz^I}_{ij} \left( \frac{ \bm {b^c}_{ij}}{2} \right) \right) \frac{w_i}{\bm t^2_{ij}} - \bm {tz^R}_{ij}p_{ij} - \bm {tz^I}_{ij}q_{ij} \right) \leq \nonumber \\
& \left( -\bm {t^I}_{ij} - \bm {tz^R}_{ij} \left( \frac{ \bm {b^c}_{ij}}{2} \right) \right) \frac{w_i}{\bm t^2_{ij}} - \bm {tz^R}_{ij}q_{ij} + \bm {tz^I}_{ij}p_{ij} \;\; (i,j) \in E \\
& \tan(\bm {\theta^\Delta}) \left( \left( \bm {t^R}_{ij} - \bm {tz^I}_{ij} \left( \frac{ \bm {b^c}_{ij}}{2} \right) \right) \frac{w_i}{\bm t^2_{ij}} - \bm {tz^R}_{ij}p_{ij} - \bm {tz^I}_{ij}q_{ij} \right) \geq \nonumber \\ 
& \left( -\bm {t^I}_{ij} - \bm {tz^R}_{ij} \left( \frac{ \bm {b^c}_{ij}}{2} \right) \right) \frac{w_i}{\bm t^2_{ij}} - \bm {tz^R}_{ij}q_{ij} + \bm {tz^I}_{ij}p_{ij} \;\; (i,j) \in E 
\end{align}
\end{subequations}
\end{model}

\begin{model}[t]
\caption{The Copper Plate Relaxation of the Extended AC Power Flow  \refcpepf~in Real Numbers.}
\label{model:cp_pf_e_real}
\begin{subequations}
\begin{align}
\mbox{\bf variables: \hspace{-1.3cm} } \nonumber \\
& p^g_i \in (\bm {p^{gl}}_i, \bm {p^{gu}}_i) \;\; \forall i\in N \nonumber \\
& q^g_i \in (\bm {q^{gl}}_i, \bm {q^{gu}}_i) \;\; \forall i\in N \nonumber \\
& w_i \in ((\bm {v^l}_i)^2, (\bm {v^u}_i)^2) \;\; \forall i\in N \nonumber \\
\mbox{\bf subject to: \hspace{-1.3cm} } \nonumber \\
%
%
%
& \sum_{i \in N}  p^g_i - \sum_{i \in N}  {\bm p^d_i} - \sum_{i \in N} \bm g^s_{i} w_i \geq 0 \\ 
& \sum_{i \in N}  q^g_i - \sum_{i \in N} {\bm q^d_i} + \sum_{i \in N} \bm b^s_{i} w_i \geq \sum_{\substack{(i,j)\in E}} -\frac{\bm {b^c}_{ij}}{2} \left( \frac{w_i}{\bm {t}^2_{ij}} +  w_j \right) 
\end{align}
\end{subequations}
\end{model}

\end{document}